\newcommand{\stat}{\operatorname{stat}}
\newcommand{\area}{\operatorname{area}}
\newcommand{\dinv}{\operatorname{dinv}}
\newcommand{\ides}{\operatorname{ides}}
\newcommand{\comp}{\operatorname{comp}}
\DeclareMathOperator{\Cop}{C}
\DeclareMathOperator{\Sop}{S}
\DeclareMathOperator{\qpoly}{qpoly}
\newtheorem{theorem}{Theorem}[section]
\newtheorem{lemma}[theorem]{Lemma}
\newtheorem{conjecture}[theorem]{Conjecture}
\newtheorem{corollary}[theorem]{Corollary}
\theoremstyle{remark}
\newtheorem{remark}[theorem]{Remark}
\numberwithin{equation}{section}
\title{ A combinatorial model for \( \nabla m_\mu \)}\protect\thanks{An extended abstract will appear in the DMTCS Proceedings of FPSAC 2018.} 
\author{Emily Sergel} \thanks{Partially supported by NSF grant DMS-1603681.}
\address{Department of Mathematics\\ University of Pennsylvania \\ Philadelphia, PA, USA}
\email{esergel@math.upenn.edu}
\begin{document}

\maketitle

\begin{abstract}

The modified Macdonald polynomials introduced by Garsia and Haiman (1996) have many remarkable combinatorial properties. One such class of properties involves applying the \(\nabla\) operator of Bergeron and Garsia (1999) to basic symmetric functions. The first discovery of this type was the Shuffle Conjecture of Haglund, Haiman, Loehr, Remmel, and Ulyanov (2005), which relates the expression \(\nabla e_n\) to parking functions. A refinement of this conjecture, called the Compositional Shuffle Conjecture, was introduced by Haglund, Morse, and Zabrocki (2012) and proved by Carlsson and Mellit (2015).

We give a symmetric function identity relating hook monomial symmetric functions to the operators used in the Compositional Shuffle Conjecture. This implies a parking function interpretation for nabla of a hook monomial symmetric function, as well as LLT positivity. We show that our identity is a \(q\)-analog of the expansion of a hook monomial into complete homogeneous symmetric functions given by Kulikauskas and Remmel (2006). We use this connection to conjecture a model for expanding \(\nabla m_\mu\) in this way when \(\mu\) is not a hook.

\end{abstract}

\section{Introduction} \label{sec:newintro}

Jeff Remmel was an important figure in the world of algebraic combinatorics. He was well-loved in this community, especially by his many students, including those he did not formally advise. The author was one such student, substantially shaped as a mathematician by all that she learned from him during her graduate studies. He will be dearly missed.

Jeff's research included key classical results in symmetric function theory, such as the combinatorial descriptions for the transition matrices between the standard bases. Many of these clever and subtle constructions are due to E\(\mathrm{\breve{g}}\)ecio\(\mathrm{\breve{g}}\)lu and Remmel \cite{rimhook,bricktab}. Beck, Remmel and Whitehead \cite{Bntrans} filled in the remaining transition matrices without proof while extending these rules to the \(B_n\)-case. Kulikauskas and Remmel \cite{bibrick} provided the missing details.

A complete table of transition matrices is an invaluable tool in symmetric function theory. These basis changes often pop up in other problems requiring quasi-symmetric or \(q\)-analogs. The problem discussed here is one such example. Observing experimentally that \(\nabla m_\mu\) is always Schur positive or Schur negative has lead the author to study a \(q\)-enumeration of Beck, Remmel and Whitehead's objects, called bi-brick permutations. Throughout we follow the conventions of Kulikauskas and Remmel. 

%%%%%%%%%%%%%%%%%%%%%%%%%%%%%%%%%%%%%%%%%%%%%%%%%%%

\section{Haglund, Morse, and Zabrocki's \( \Cop \) operators} \label{Intro}

In 1988, Macdonald \cite{macoriginal} introduced a new basis for the ring of symmetric functions. (See Macdonald \cite{macbook} for an introduction to symmetric function theory.) Later Garsia and Haiman \cite{modmac} modified this basis to form the modified Macdonald polynomial basis \( \{ \widetilde{H}_\mu[X;q,t] \} \). They sought a representation-theoretic interpretation for this basis, which led them to study a number of remarkable \(S_n\) bi-modules. Among these was the module of Diagonal Harmonics. They conjectured a formula for its Frobenius characteristic \(DH_n[X;q,t]\) and Haiman \cite{DHdim} later proved their conjecture using algebraic geometry. However, this formula is not obviously Schur positive or even polynomial.

Bergeron and Garsia \cite{SciFi} noted that the formula of Garsia and Haiman was very close to the modified Macdonald expansion of \(e_n\). Inspired by this similarity, they defined the linear symmetric function operator \( \nabla \), which acts by \( \nabla \widetilde{H}_\mu = t^{n(\mu)} q^{n(\mu')} \widetilde{H}_\mu \). In this language, \( DH_n[X;q,t]=\nabla e_n \). In \cite{shuffleconj}, Haglund, Haiman, Loehr, Remmel and Ulyanov discovered a combinatorial interpretation for \( \nabla e_n \) in terms of parking functions (defined below). Their conjecture is known as the Shuffle Conjecture, and was only recently proved.

In 1966, Konheim and Weiss \cite{KW} introduced parking functions to study a combinatorial problem involving cars parking on a one-way street. While they thought of parking functions as functions, for our purposes it is more helpful to follow the interpretation introduced by Garsia and Haiman \cite{PFpath}. A Dyck path in the \( n \times n \) lattice is a path \( (0,0) \) to \( (n,n) \) of North and East steps which stays weakly above the line \(y=x\). A parking function is a Dyck path with labels \( \{1,2,\dots,n\} \) on North steps which are column-increasing. We write the labels of a parking function in the cell just East of each North step. The labels of a parking function are known as cars. For example, see Figure~\ref{fig:PFex}. 

\begin{figure}[H]
\begin{center}
\includegraphics[width=1.2in]{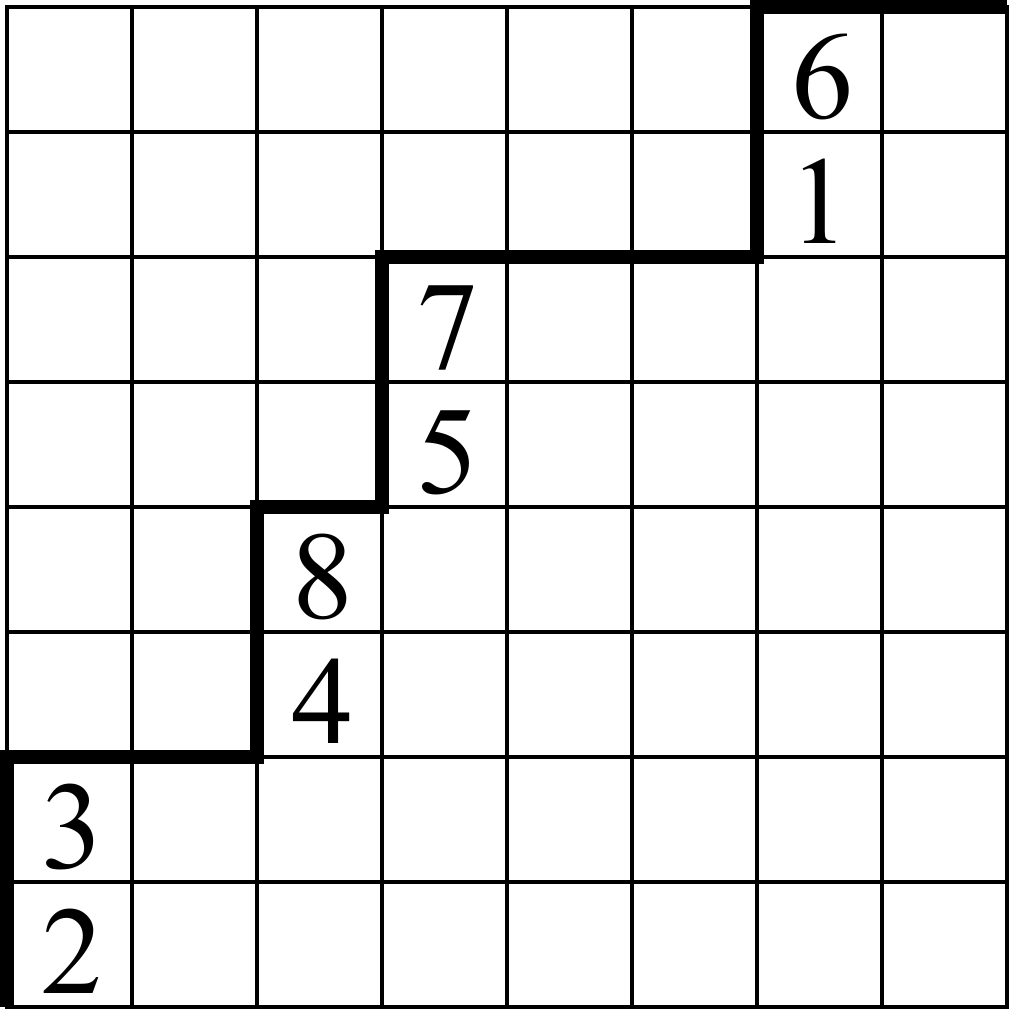}
\caption{A parking function with 8 cars.}
\label{fig:PFex}
\end{center}
\end{figure}

The symmetric function \( \nabla e_n \) is a weighted sum of parking functions involving three statistics. The most natural of these statistics is the \( \area \) - the number of full cells between the main diagonal \( y=x \) and the underlying Dyck path. In Figure~\ref{fig:PFex}, the \( \area \) is 6. The other two statistics use the notion of diagonals. Let the \( k \)-diagonal be the set of cells cut by the line \( y=x+k \). In particular, the main diagonal or \(0\)-diagonal consists of the cells cut by \( y=x \). In Figure~\ref{fig:PFex}, there are 3 cars in the \( 0 \)-diagonal, 4 cars in the \( 1 \)-diagonal, and 1 car in the \( 2 \)-diagonal.

The word \( \sigma \) of a parking function is the permutation obtained by reading cars from highest to lowest diagonal and right to left within each diagonal. In Figure~\ref{fig:PFex}, \( \sigma = 7 \, 6 \, 5 \, 8 \, 3 \, 1 \, 4 \, 2 \). Recall that the \( \ides \) of a permutation \( \sigma \) is the descent set of \( \sigma^{-1} \). Alternatively, it is the set of \( i \) so that \( i+1 \) occurs left of \( i \) in \( \sigma \). In the example, \( \ides(\sigma) = \{2,4,5,6\} \).  Then each parking function \( PF \) is weighted by the quasi-symmetric function \( F_{\ides(PF)} \). Here if \( S \subset \{1,2,\dots,n-1\} \), \( F_S \) is the following degree-\( n \) fundamental quasi-symmetric function defined by  Gessel \cite{Gessel}.
\[
F_S = \sum_{\substack{ 0 \leq a_1 \leq a_2 \leq \dots \leq a_n \\ i \in S \Rightarrow a_i < a_{i+1}}} x_{a_1} x_{a_2} \dots x_{a_n}
\]

Finally,  the \( \dinv \) of a parking function counts certain inversions in \( \sigma \). If two cars are in the same diagonal and the larger occurs further right, we say they create a primary diagonal inversion. If two cars are in adjacent diagonals so that the larger car is higher and further left, they create a secondary diagonal inversion. The \( \dinv \) of a parking function is the total number of primary and secondary diagonal inversions. In Figure~\ref{fig:PFex}, for example, cars 3 and 5 make a primary diagonal inversion, while cars 1 and 3 make a secondary diagonal inversion. In total, there are five primary diagonal inversions and four secondary diagonal inversions in our example. Hence \( \dinv=9 \).

Let \( {\mathcal PF}_n \) be the set of all parking functions on \( n \) cars. Then the classical Shuffle Conjecture of Haglund, Haiman, Loehr, Remmel and Ulyanov \cite{shuffleconj} states
\[
\nabla e_n = \sum_{PF \in {\mathcal PF}_n} t^{\area(PF)} q^{\dinv(PF)} F_{\ides(PF)}.
\]
In \cite{compconj}, Haglund, Morse and Zabrocki refined the Shuffle Conjecture using the following plethystic symmetric function operators, \( \Cop_a \) for non-negative integers \(a\). For any symmetric function \( P\), 
\[
\Cop_a \, P[X] \, = \, \left(- \frac{1}{q} \right)^{a-1} P \hspace{-.1cm} \left[ X - \frac{1-1/q}{z} \right] \, \sum_{m \geq 0} z^m h_m[X] \, \Big|_{z^a}
\]
where \(f|_{z^a}\) is the coefficient of \(z^a\) when \(f\) is expanded as a formal power series in \(z\). (For an introduction to plethystic notation see Loehr and Remmel \cite{pleth}.) Their refinement of the Shuffle Conjecture, which is stated below, was recently proved by Carlsson and Mellit \cite{compproof}. Here \( \comp(PF) \) is the composition of \( n \) giving the distances between points \( (i,i) \) on \( PF \)'s underlying path. For example, the parking function in Figure~\ref{fig:PFex} has \( \comp = (2,4,2) \). 
\begin{theorem}[Carlsson-Mellit]  \label{thm:comp}
For all compositions \( \alpha \models n \),
\[
\nabla \Cop_{\alpha_1} \cdots \Cop_{\alpha_{\ell(\alpha)}} 1 = \sum_{\substack{ PF \in {\mathcal PF}_n \\ \comp(PF)=\alpha}} t^{\area(PF)} q^{\dinv(PF)} F_{\ides(PF)}.
\]
\end{theorem}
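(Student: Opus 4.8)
The plan is to prove the identity by exhibiting a single algebraic framework in which both sides arise as the image of $1$ under one and the same word of operators, so that it suffices to verify a shared recursion together with a common base case. Concretely, I would work inside the \emph{Dyck path algebra}: a graded family of modules $V_{\bullet} = \bigoplus_{k \geq 0} V_k$, where each $V_k$ is the tensor product of the ring of symmetric functions with a polynomial ring $\mathbb{Q}(q,t)[y_1,\dots,y_k]$, equipped with three kinds of operators — a raising operator $d_+ : V_k \to V_{k+1}$, a lowering operator $d_- : V_k \to V_{k-1}$, and Hecke-type operators $T_i : V_k \to V_k$ acting on the $y$-variables. These satisfy a short list of relations (the relations of the Dyck path algebra), and the whole strategy hinges on showing that both the left- and right-hand sides of the claimed identity equal $W_\alpha \, 1$ for one explicit word $W_\alpha$ in $d_+$, $d_-$, and the $T_i$ determined by $\alpha$.

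The first task is to reinterpret the $\Cop_a$ on the algebraic side. I would show that composing $d_+$ with an appropriate product of the $T_i$ and then applying a string of $d_-$'s realizes the plethystic operator $\Cop_a$ exactly, including the prefactor $(-1/q)^{a-1}$; equivalently, each $\Cop_a$ is encoded by the ``one North step followed by $a$ East steps'' gadget. Concatenating these gadgets according to $\alpha = (\alpha_1,\dots,\alpha_{\ell(\alpha)})$ builds $W_\alpha$ with $W_\alpha \, 1 = \Cop_{\alpha_1}\cdots\Cop_{\alpha_{\ell(\alpha)}} 1$, after which applying $\nabla$ — a controlled twist in this framework — yields the left-hand side.

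The second task is to realize the right-hand side combinatorially as the same $W_\alpha \, 1$. Here I would build a representation of the same operators on a space spanned by (partially labeled) Dyck paths, weighted by $t^{\area} q^{\dinv} F_{\ides}$, so that $d_+$ prepends a North step, $d_-$ prepends an East step, and the $T_i$ implement shuffling of labels within a diagonal. The point is that each elementary operator adjusts $\area$, $\dinv$, and $\ides$ in precisely the way forced by the algebra relations and by the definition of $F_S$. Tracking $\comp$ is then automatic, since the path's returns to the main diagonal are recorded by where the $d_-$'s fall, which is exactly $\alpha$. Once both sides are identified with $W_\alpha \, 1$, the theorem follows.

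The main obstacle is verifying that this combinatorial representation is genuinely a representation of the Dyck path algebra: that the bookkeeping for $\area$, $\dinv$, and the quasi-symmetric weight $F_{\ides}$ is \emph{exactly} compatible with every relation among $d_+$, $d_-$, and the $T_i$ — in particular the quadratic Hecke relation and the ladder relations between raising and lowering operators. This is where the subtle $q$-weights live and where the argument is genuinely hard: one must check that local moves on parking functions interact correctly with diagonal inversions and with the fundamental quasi-symmetric expansion, rather than merely matching leading terms. A secondary difficulty is pinning down the normalization by which $\nabla$ and the prefactors in $\Cop_a$ conspire to leave the combinatorial weights unscaled, which requires comparing the eigenvalue action of $\nabla$ on $\widetilde{H}_\mu$ against the operator word.
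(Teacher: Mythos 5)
This theorem is not proved in the paper at all: it is quoted as the main result of Carlsson and Mellit \cite{compproof}, so there is no internal proof to compare yours against. Your outline is a recognizable description of the architecture of the actual Carlsson--Mellit argument --- the Dyck path algebra acting on $V_k = \Lambda \otimes \mathbb{Q}(q,t)[y_1,\dots,y_k]$ via $d_+$, $d_-$, and Hecke operators $T_i$, with both sides of the identity realized as the image of $1$ under a single operator word determined by $\alpha$ --- so you have correctly identified the right strategy, which is no small thing given that this was an open conjecture for a decade.

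As a proof, however, every load-bearing step is deferred, and none of them is routine. First, the claim that $\Cop_a$ is ``encoded by the one-North-step-followed-by-$a$-East-steps gadget,'' prefactor and all, is a nontrivial plethystic identity that must be proved, not observed; the raising and lowering operators do not obviously reproduce the shift $X \mapsto X - \frac{1-1/q}{z}$ in the definition of $\Cop_a$. Second, the assertion that $\nabla$ is ``a controlled twist in this framework'' hides one of the two hardest lemmas in Carlsson--Mellit: one must establish an explicit commutation relation between $\nabla$ and the generators (in effect, compute the conjugate of $d_+$ by $\nabla$), and this cannot be done by comparing eigenvalues on $\widetilde{H}_\mu$ alone since $d_\pm$ change the number of $y$-variables. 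Third, and most substantially, the verification that the weights $t^{\area} q^{\dinv} F_{\ides}$ assemble into a genuine representation of the algebra requires introducing the correct intermediate objects (partially labelled Dyck paths with a relative $\dinv$ statistic and a precise rule for which labels are ``open''), and the quadratic Hecke relation and the $d_+ d_-$ versus $d_- d_+$ relations each impose exact constraints on these definitions that your sketch does not pin down. You have named these as the hard points, which is honest, but naming the gap is not the same as closing it: as written this is a research plan that happens to coincide with the published proof, not a proof.
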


We will use the shorthand \( \Cop_\alpha = \Cop_{\alpha_1} \circ \cdots \circ \Cop_{\alpha_k} \) for a composition \( \alpha=(\alpha_1,\dots,\alpha_{\ell(\alpha)}) \). Haglund, Morse and Zabrocki \cite{compconj} showed that
\[
e_n = \sum_{\alpha \models n} \Cop_{\alpha} 1.
\]
This, together with the Compositional Shuffle Conjecture implies the classical Shuffle Conjecture.

Tthe Compositional Shuffle Conjecture can be used as a tool for finding and proving combinatorial interpretations of images under the \( \nabla \) operator: If some symmetric function \( f \) can be expanded positively using the \( \Cop \) operators applied to 1, then \( \nabla f \) can be interpreted as a weighted sum of parking functions. Additionally, Haglund, Haiman, Loehr, Remmel and Ulyanov showed that the weighted sum of all parking functions with the same supporting Dyck path can be interpreted as an LLT polynomial. These polynomials, introduced by Lascoux, Leclerc and Thibon \cite{LLT} are well-studied symmetric functions that are believed to be Schur-positive. Indeed, Grojnowski and Haiman have proven the positivity conjecture in an unpublished manuscript \cite{LLTpos}. (It is an open problem to explicitly give the Schur expansion of an LLT polynomial.) Hence a positive ``\( \Cop \) expansion" of \( f \) implies the Schur positivity of \( \nabla f \). 

However, the family \( \{ \Cop_\alpha 1 \}_{\alpha \models n} \) does not form a basis for the ring of symmetric functions - the subcollection \( \{ \Cop_\lambda 1 \}_{\lambda \vdash n} \) does. But simply expanding \(f\) in terms of this basis may not yield a positive or even polynomial expansion, despite the existence of a nice expansion into the full collection. We give a simple criteria for finding \( \Cop \) expansions which was pointed out to the author by Adriano Garsia.

\begin{lemma}
For any composition \( \alpha \),
\[ \Cop_\alpha 1 \Big|_{q=1} = (-1)^{|\alpha|-l(\alpha)} \, h_{\alpha} \]
\end{lemma}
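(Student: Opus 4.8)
The plan is to reduce everything to the behavior of a single operator \( \Cop_a \) at \( q=1 \) and then induct on the number of parts of \( \alpha \). The key observation is that the plethystic substitution built into \( \Cop_a \) becomes trivial at \( q=1 \). Writing \( A = \frac{1-1/q}{z} \), for every \( k \geq 1 \) we have \( p_k[A] = \frac{1-q^{-k}}{z^k} \), which vanishes when \( q=1 \). Since a plethystic substitution is determined by its action on power sums, it follows that for any symmetric function \( P[X] \) whose coefficients are Laurent polynomials in \( q \) one has \( P[X-A]\big|_{q=1} = P[X]\big|_{q=1} \); in particular the substituted expression carries no \( z \)-dependence at \( q=1 \).

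Granting this, I would first show that at \( q=1 \) the operator \( \Cop_a \) is simply multiplication by \( (-1)^{a-1} h_a \). The prefactor \( (-1/q)^{a-1} \) specializes to \( (-1)^{a-1} \), and by the observation above \( P[X-A]\big|_{q=1} = P\big|_{q=1} \) is independent of \( z \), so extracting the coefficient of \( z^a \) from \( \bigl(P\big|_{q=1}\bigr)\sum_{m \geq 0} z^m h_m[X] \) returns \( \bigl(P\big|_{q=1}\bigr) h_a \). Hence
\[
\Cop_a P \Big|_{q=1} = (-1)^{a-1} \left(P\big|_{q=1}\right) h_a .
\]
Before invoking this I should check that no poles in \( q \) are created: the prefactor, the substitution, and the coefficient-of-\( z^a \) extraction each send a symmetric function with Laurent-polynomial \( q \)-coefficients to another such function, so starting from \( 1 \) every \( \Cop_\beta 1 \) is regular at \( q=1 \) and the specialization is legitimate.

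The proof then concludes by induction on \( \ell(\alpha) \). The base case \( \alpha=() \) is immediate, since \( \Cop_{()}1 = 1 = h_\emptyset \). For the inductive step, write \( \alpha = (\alpha_1, \beta) \) and apply the single-operator identity with \( P = \Cop_\beta 1 \), using the inductive hypothesis \( \Cop_\beta 1 \big|_{q=1} = (-1)^{|\beta|-\ell(\beta)} h_\beta \):
\[
\Cop_\alpha 1 \Big|_{q=1} = (-1)^{\alpha_1 - 1}(-1)^{|\beta|-\ell(\beta)} h_{\alpha_1} h_\beta = (-1)^{|\alpha|-\ell(\alpha)} h_\alpha ,
\]
where the last equality uses multiplicativity of the complete homogeneous basis together with \( |\alpha| = \alpha_1 + |\beta| \) and \( \ell(\alpha) = 1 + \ell(\beta) \).

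I expect the only genuinely delicate point to be the interchange of the specialization \( q=1 \) with the plethystic substitution and the coefficient extraction. This is why I would isolate the regularity-at-\( q=1 \) check as a separate step; once that is in place, the vanishing of \( p_k[A] \) at \( q=1 \) does all the real work and the remainder is bookkeeping with signs.
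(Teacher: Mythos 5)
Your proposal is correct and follows essentially the same route as the paper: the entire argument rests on the observation that the plethystic shift by \( \frac{1-1/q}{z} \) vanishes at \( q=1 \), so that \( \Cop_a \) reduces to multiplication by \( (-1)^{a-1} h_a \). The paper leaves the regularity-at-\( q=1 \) check and the induction on \( \ell(\alpha) \) implicit, whereas you spell them out; this is merely additional bookkeeping, not a different method.
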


\begin{proof}
Note that when \(q=1\), the plethystic shift in the definition of the \( \Cop \) operators disappears. Hence we are just left with
\[
\Cop_k f \Big|_{q=1} = (-1)^{k-1} \, h_k \, f.
\vspace{-18pt}
\]
\end{proof}
\noindent By the lemma, a symmetric function \(f\) can only have a positive \( \Cop \)-expansion if its expansion  \( f = \sum_{\lambda} c_{\lambda} h_{\lambda} \) into the \(h\)-basis has integral coefficients \( c_\lambda \) with sign \( (-1)^{|\lambda|-\ell(\lambda)} \). If \( f \) does have such a \( \Cop \)-expansion, then the coefficients of all \( \alpha \) rearranging to \( \lambda \) sum to a \( q \)-analog of \( |c_\lambda| \).

Experimentally we can see that \( \nabla m_\mu \) is always Schur positive or Schur negative. As part of her thesis work, the author and Garsia \cite{mythesis} showed that
\[
(-1)^{n-1} p_n = \sum_{\alpha \models n} [\alpha_n]_q \Cop_\alpha 1.
\]
Inspired by the fact that \( m_{1^n}=e_n \) and \( m_n=p_n \), we seek a positive polynomial \( \Cop \) expansion for the monomial symmetric functions. We succeed in doing this for \( m_{\mu} \) when \( \mu \) is a hook shape. Based on the criteria above, we see that any such expansion must be a \(q\)-analog of the combinatorial expansion for monomial symmetric functions into the complete homogeneous symmetric functions given by Kulikauskas and Remmel \cite{bibrick}. Such a \(q\)-analog has two important components: a \(q\)-statistic and a refinement of the associated partition (previously recorded by \(h_\lambda\)) to a composition (now recorded by \( \Cop_\alpha \)). We find a specific candidate for the later and make some conjectural observations about the former.

%%%%%%%%%%%%%%%%%%%%%%%%%%%%%%%%%%%%%%%%%%%%%%%%%%%

\section{Bi-brick permutations and a model for \(m_\mu\)}

We begin with a brief overview of the combinatorial expansion from the monomial symmetric functions to the complete homogeneous symmetric functions as described in \cite{bibrick}. Essentially, the coefficient of \( h_\lambda \) in \( m_\mu \) counts (with the appropriate sign) what Beck, Remmel, and Whitehead \cite{Bntrans} call ``bi-brick permutations." These objects are analogous to permutations written in cycle notation. They consists of products of cycles decorated inside and outside with ``bricks". The lengths of the inner bricks form the parts of \( \mu \), while the outer bricks form \( \lambda \). Rotational symmetry is forbidden. For example, the object in Figure~\ref{fig:goodex} is a bi-brick permutation. The cycle shown in Figure~\ref{fig:badex}, on the other hand, is not allowed in any bi-brick permutation. This is because it is unchanged when rotated 180 degrees.

\begin{figure}
\begin{center}
\includegraphics[width=3in]{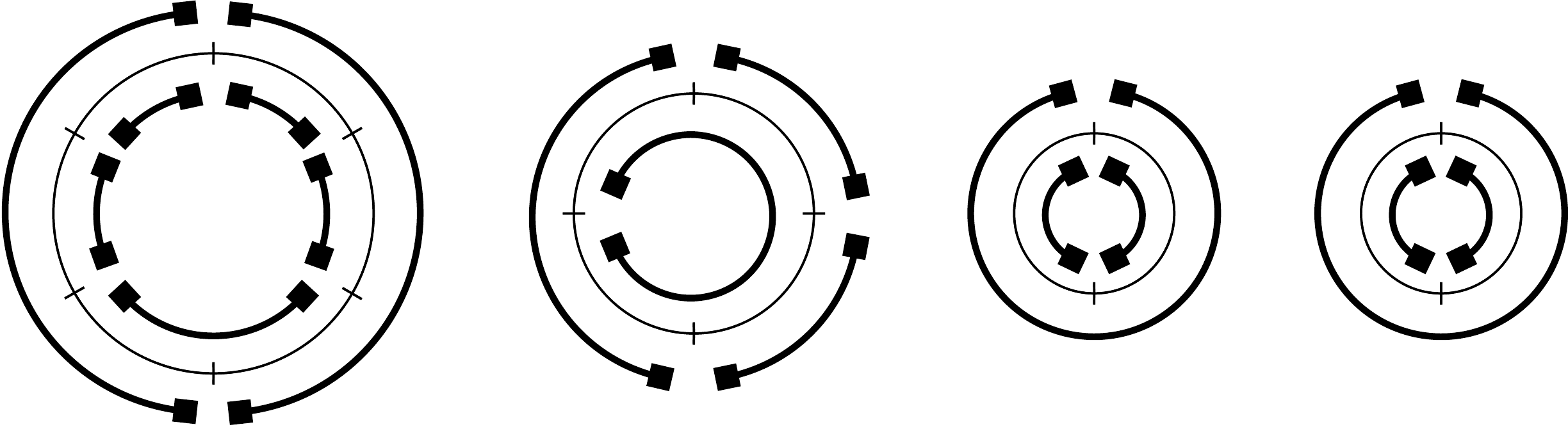}
\caption{A bi-brick permutation of size 14.}
\label{fig:goodex}
\end{center}
\end{figure}

\begin{figure}
\begin{center}
\includegraphics[width=.8in]{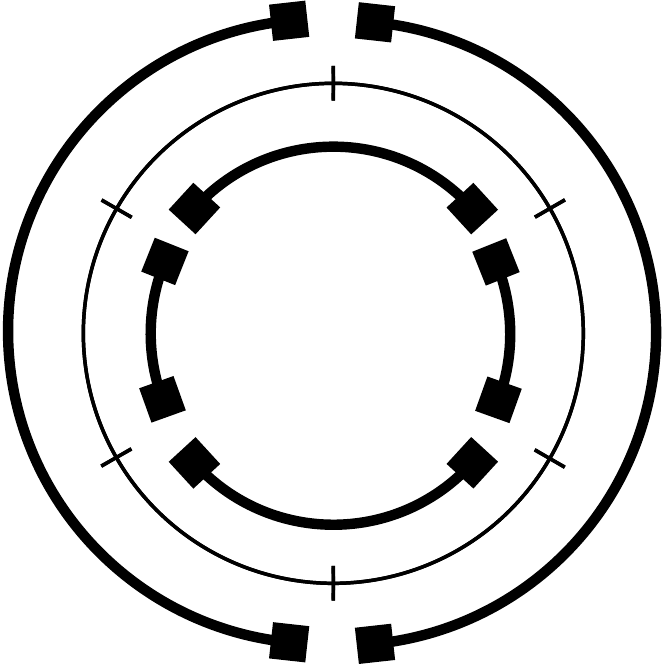}
\caption{A cycle with rotational symmetry.}
\label{fig:badex}
\end{center}
\end{figure}

Bi-brick permutations are in bijection with multisets of Lyndon words in the alphabet $\{ B < L < N < U\}$. (Here we deviate slightly from \cite{bibrick} by using the letter ``U'' in place of ``M''. This is to avoid confusion about the alphabetical order.) The bijection is as follows: Consider each cycle individually. Suppose the total length of the cycle is $n$. Start at any of the $n$ segments with $w$ initialized to the empty word and work clockwise. At each segment, add a letter to the end of $w$ according to which kind of brick(s) start at that segment: $B$ if both kinds start, $L$ if only an outer brick (contributing to $\lambda$) starts, $U$ if only an inner brick (contributing to $\mu$) starts, and $N$ if neither kind of brick starts. When the cycle is complete, $w$ will be a word of length $n$. Since the initial cycle has no rotational symmetry, $w$ is not a power of a shorter word. Hence there is exactly one Lyndon word in its rotational orbit. Take this Lyndon word to the be image of the given cycle. This process is clearly invertible since the letters of the word describe the starting cycle. Applying this process to each cycle of the bi-brick permutation in Figure~\ref{fig:goodex} gives $\{BUULUU,LLLU,BU,BU\}$. However, for the cycle in Figure~\ref{fig:badex} we obtain $LUULUU$ which has no Lyndon word in its rotational orbit. This is due to the rotational symmetry of the starting cycle.

For any bi-brick permutation $\Pi$, let $\mu(\Pi)$ be the partition whose parts are the lengths of $\Pi$'s inner bricks. We also associate a composition to $\Pi$ whose parts are the lengths of $\Pi$'s outer bricks read in a specific order. That is, for a cycle $C$ of length $n$ whose Lyndon word contains a $B$, let $\alpha(C) = (i_2-i_1,i_3-i_2, \dots, n+i_1-i_k)$ where $i_1,i_2,\dots,i_k$ are the locations of all $B$'s and $L$'s in $C$'s Lyndon word. (Note: $i_1$ is always 1.) If $C$ is a cycle whose Lyndon word does not contain a $B$, rotate the inner blocks of $C$ clockwise as many times as necessary until the resulting cycle $\widehat{C}$ has a $B$ in its Lyndon word (i.e. until the ends of some inner and outer bricks line up.) Let $\alpha(C)=\alpha(\widehat{C})$. Finally for a bi-brick permutation $\Pi$, let $\alpha(\Pi)$ be the composition obtained by concatenating $\alpha(C)$ for all $C \in \Pi$ in reverse lexicographic order of Lyndon words. For example, the Lyndon word of the second cycle of Figure~\ref{fig:goodex} does not contain a $B$. Rotating gives a new cycle with Lyndon word $BLLN$. Therefore the cycles of Figure~\ref{fig:goodex} have $\alpha$ equal to $(3,3)$, $(1,1,2)$, $(2)$, and $(2)$, respectively, from left to right. Sorting them according to the Lyndon words of the original cycles gives $\alpha(\Pi)=(1,1,2,3,3,2,2)$.

\begin{figure}
\begin{center}
\includegraphics[width=2in]{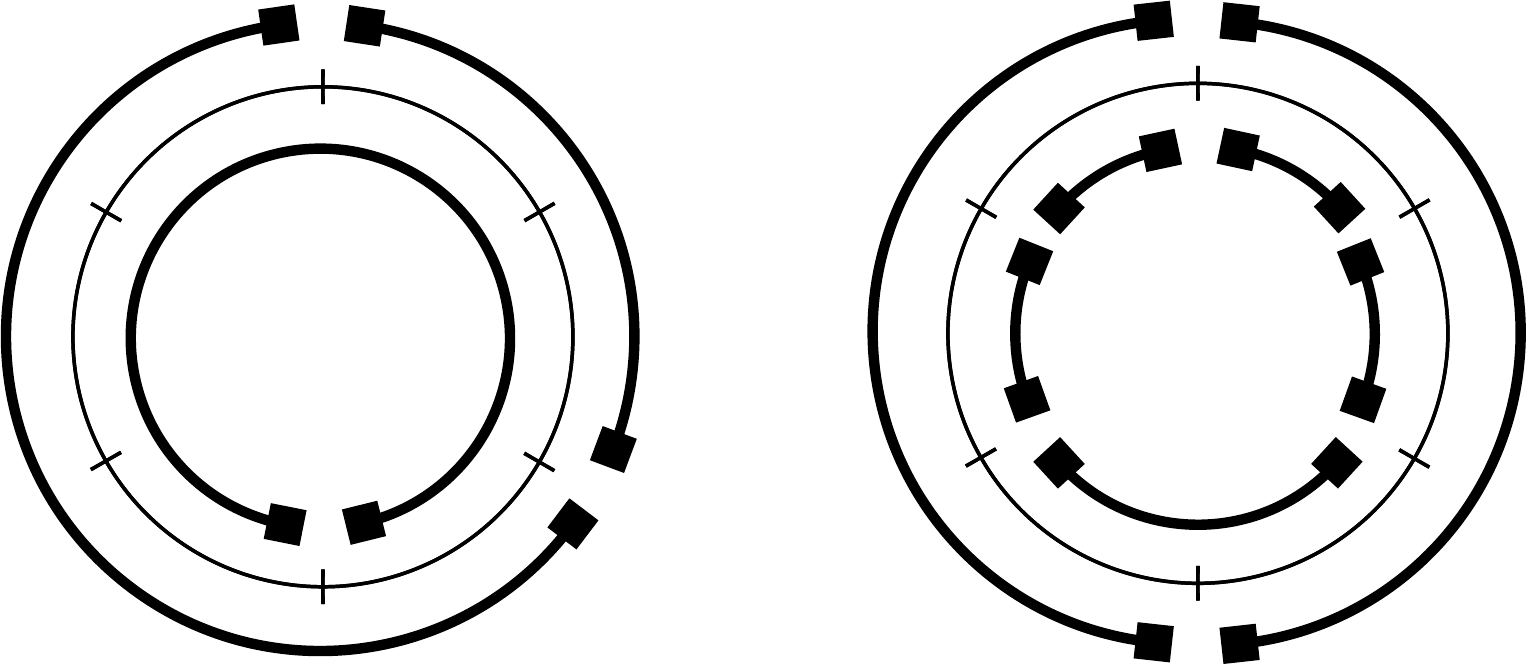}
\caption{A bi-brick permutation with size 12.}
\label{fig:goodex2}
\end{center}
\end{figure}

It is important that when a cycle $C$'s Lyndon word does not contain $B$, the order of $\alpha$ is read according to the Lyndon word of the rotated cycle $\widehat{C}$, yet $C$'s Lyndon word is used for sorting. For example, in Figure~\ref{fig:goodex2}, the first cycle $C$ has Lyndon word $LNLUNN$ giving $\alpha=(4,2)$ while $\widehat{C}$'s Lyndon word $BNLNNN$ gives $\alpha=(2,4)$. The Lyndon word of the second cycle is $BUULUU$ which lies lexicographically between the Lyndon words of $C$ and $\widehat{C}$. So for this bi-brick permutation $\Pi$, we have $\alpha(\Pi)=(2,4,3,3)$.

Based on experimental data, we make the following conjecture.
\begin{conjecture}
Let $\mu$ be any partition. For each bi-brick permutation $\Pi$ with $\mu(\Pi)=\mu$, there is a non-negative integer $\stat(\Pi)$ so that
\[
(-1)^{|\mu|-\ell(\mu)} m_{\mu} = \sum_{\mu(\Pi)=\mu} q^{\stat(\Pi)} \Cop_{\alpha(\Pi)} 1.
\]
\end{conjecture}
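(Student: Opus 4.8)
The plan is to prove the identity in three stages: first fix the correct support and sign by specializing at $q=1$, then reduce the multi-cycle statement to a single-cycle (Lyndon word) statement via a plethystic generating-function factorization, and finally establish that single-cycle identity using the two known boundary cases. I would begin by checking that the conjecture is consistent with, and refines, the theorem of Kulikauskas and Remmel. Setting $q=1$ and applying the specialization lemma above, each summand collapses to $\Cop_{\alpha(\Pi)}1\big|_{q=1}=(-1)^{|\mu|-\ell(\alpha(\Pi))}h_{\alpha(\Pi)}$. Since $|\alpha(\Pi)|=|\mu|$ and $\ell(\alpha(\Pi))$ equals the number of outer bricks of $\Pi$, and since $h_{\alpha(\Pi)}=h_{\lambda(\Pi)}$ where $\lambda(\Pi)$ sorts the outer-brick lengths, the right-hand side becomes $\sum_{\mu(\Pi)=\mu}(-1)^{|\mu|-\ell(\lambda(\Pi))}h_{\lambda(\Pi)}$. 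Matching against $(-1)^{|\mu|-\ell(\mu)}m_\mu$ recovers exactly the bi-brick expansion of $m_\mu$ into the $h$-basis. This step is not the difficulty; rather it pins down which compositions $\alpha$ may appear and forces $\stat(\Pi)\ge 0$ with $\sum_{\Pi:\,\lambda(\Pi)=\lambda}q^{\stat(\Pi)}$ a $q$-analog of the corresponding $h$-coefficient, as noted after the lemma.

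Next I would pass to generating functions in auxiliary markers $u_1,u_2,\dots$, one per inner-brick length. On the symmetric-function side one has the product identity $\sum_\mu u^\mu m_\mu[X]=\prod_i\bigl(1+\sum_{k\ge1}u_k x_i^k\bigr)$, where $u^\mu=\prod_k u_k^{a_k(\mu)}$ and $a_k(\mu)$ is the number of parts of $\mu$ equal to $k$; absorbing the sign $(-1)^{|\mu|-\ell(\mu)}=\prod_k((-1)^{k-1})^{a_k(\mu)}$ by the substitution $u_k\mapsto(-1)^{k-1}u_k$ turns the left side into $\prod_i\bigl(1+\sum_{k\ge1}(-1)^{k-1}u_k x_i^k\bigr)$. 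Taking a plethystic logarithm yields a ``connected'' series that is linear in the power sums $p_d$, the primitive elements. The combinatorial side is built to mirror this exactly: a bi-brick permutation is a multiset of cycles, cycles are indexed by Lyndon words, and $\alpha(\Pi)$ is the concatenation of the per-cycle compositions in reverse-lexicographic order of the cycles' Lyndon words. If the statistic is additive over cycles, $\stat(\Pi)=\sum_{C\in\Pi}\stat(C)$, then $\sum_\Pi u^{\mu(\Pi)}q^{\stat(\Pi)}\Cop_{\alpha(\Pi)}1$ becomes a Lyndon-ordered, PBW-type product over single cycles, i.e. the noncommutative exponential of the single-cycle series. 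Thus the whole conjecture reduces to a single identity between the two primitive series, one cycle at a time.

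For that single-cycle identity I would anchor on the two cases already available. The hook extremes $m_{1^n}=e_n=\sum_{\alpha\models n}\Cop_\alpha 1$ and $m_n=p_n$ with $(-1)^{n-1}p_n=\sum_{\alpha\models n}[\alpha_n]_q\,\Cop_\alpha 1$ are precisely the connected contributions of a single cycle carrying, respectively, all length-one inner bricks and a single length-$n$ inner brick. The power sum is the prototypical plethystically primitive symmetric function, so its appearance is exactly what the logarithm of the previous step predicts, and the factor $[\alpha_n]_q=1+q+\dots+q^{\alpha_n-1}$ already exhibits the shape of $\stat$ on single-cycle objects. The remaining task is to interpolate between these extremes over all inner-brick types of a single cycle and to read off $\stat(C)$ from the resulting coefficients.

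The hard part will be twofold and is where the conjecture genuinely lies. First, one must \emph{define} $\stat$ and prove it is additive over cycles, with no cross-terms, so that the generating function factors as above; the paper proposes a candidate, but establishing additivity is delicate. Second, one must prove the single-cycle operator identity between the two primitive series. I expect this to require straightening (commutation) relations among the operators $\Cop_a$ coming from the Dyck path algebra of Carlsson and Mellit, so that the Lyndon-ordered product on the $\Cop$ side can be reorganized and compared against the power-sum logarithm. A possibly more tractable alternative is to apply the invertible operator $\nabla$ to both sides and use Theorem~\ref{thm:comp} to convert the statement into an equality of $q,t$-weighted sums of parking functions, then construct a $q,t$-weighted sign-reversing involution realizing the cancellations, a weighted analog of the involution underlying the proof of Kulikauskas and Remmel. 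In either approach, producing the correct $\stat$ and proving its cycle-additivity is the crux.
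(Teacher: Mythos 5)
The statement you are asked to prove is presented in the paper as a \emph{conjecture}, supported only by experimental data; the paper proves only the hook case $\mu=(n,1^k)$ (Theorem~\ref{thm:mexp}), and even there the statistic $\stat$ is described only in a remark as a ``possible value.'' Your proposal is likewise not a proof: you explicitly defer the two essential points --- defining $\stat$ and proving its additivity over cycles, and establishing the single-cycle operator identity --- so what you have is a strategy outline. The $q=1$ consistency check in your first paragraph is correct and matches the paper's discussion after its Lemma on $\Cop_\alpha 1\big|_{q=1}$, but it only verifies necessary conditions (signs, support, and that the coefficients must be $q$-analogs of the bi-brick counts); it proves nothing about the existence of $\stat$.

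Beyond incompleteness, the central reduction in your second paragraph has a genuine flaw. The factorization $\sum_\mu u^\mu m_\mu[X]=\prod_i\bigl(1+\sum_{k\ge1}u_kx_i^k\bigr)$ and its plethystic logarithm live in the commutative ring of symmetric functions, whereas the right-hand side of the conjecture is built from the noncommuting operators $\Cop_a$ composed in the specific order dictated by reverse-lexicographic order of Lyndon words, and $\Cop_\alpha\Cop_\beta 1$ is \emph{not} the product $(\Cop_\alpha 1)(\Cop_\beta 1)$. Consequently ``the noncommutative exponential of the single-cycle series'' is not an object you have shown to exist, and without explicit straightening relations for the $\Cop_a$ (which you only gesture at via the Dyck path algebra) the multi-cycle sum does not factor, so the conjecture does not reduce to a single-cycle identity. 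Relatedly, your proposed anchor $m_{1^n}=e_n=\sum_{\alpha\models n}\Cop_\alpha 1$ is not a single-cycle statement: bi-brick permutations with all inner bricks of length $1$ generally consist of many cycles, so this identity cannot serve as the ``connected'' boundary case of your logarithm; only $m_n=p_n$ is genuinely single-cycle. As it stands, the crux of the conjecture --- producing $\stat$ and proving the identity --- remains untouched, which is consistent with the paper leaving it open.
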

\noindent Furthermore, it seems from the data that we can always do this in a way such that
\begin{itemize}
\item $\stat(\Pi)$ is the sum of $\stat(C)$ for the individual cycles $C$ of $\Pi$ (with multiplicity),
\item $\stat(C)=0$ for a single cycle $C$ if and only if the Lyndon word corresponding to $C$ contains the letter $B$, and
\item $\stat(C)<n$ for every cycle $C$ of length $n$.
\end{itemize}

\begin{remark}
Note that if our conjecture is true, then every ``monomial alternating'' symmetric function has a positive $\Cop$ expansion. This condition is sufficient but not necessary. For example,
\[
q^2 \Cop_3 1 = h_3 = m_3 + m_{2,1} + m_{1,1,1}.
\]
Furthermore, not every $h_{\lambda}$ has the form $\pm q^m \Cop_{\alpha} 1$ for some power $m$ and rearrangement $\alpha$ of $\lambda$. I.e.,
\[
-q \Cop_{(2,1)} 1 = -q^2 \Cop_{(1,2)} 1 = h_{(2,1)} - \frac{q-1}{q} h_3
\]
So it is still an open problem to find (even conjecturally) a simple algorithm to check for $\Cop$ positivity. 
\end{remark}

%%%%%%%%%%%%%%%%%%%%%%%%%%%%%%%%%%%%%%%%%%%%%%%%%%%

\section{A \(C\) expansion for hook monomials} \label{sec:hook}

In this section, we present a positive $\Cop$ expansion for $(-1)^{|\mu|-\ell(\mu)} m_{\mu}$ when $\mu$ is a hook shape. First, the coefficient of each $\Cop$ operator is a polynomial in $q$ computed algorithmically. We prove this formula inductively. Then we will see that this polynomial enumerates bi-brick permutations according to our conjecture.

\begin{theorem} \label{thm:mexp}
Let \( n \geq 2 \) and \( k \geq 1 \). Then
\[
(-1)^{n-1} m_{n,1^k} = \sum_{a=1}^n \left(k+1 + \sum_{i=1}^{a-1} q^{n-i}\right) \sum_{\tau \models n-a} \, \sum_{b=0}^k \, \sum_{\rho \models k-b} \Cop_\tau \Cop_{a+b} \Cop_{\rho} 1
\]
\end{theorem}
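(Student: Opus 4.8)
The plan is to turn the identity into a recursion on the monomial side, expand the resulting power-sum-times-elementary products into the $\Cop$-family using a commutation relation extracted from the plethystic definition of $\Cop_a$, and then match the outcome against a streamlined form of the stated sum. Throughout, the target is an equality of \emph{symmetric functions}, and keeping this in mind is what shapes the whole argument.

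\emph{Step 1 (a product recursion).} Writing $p_n=m_{(n)}$ and $e_k=m_{(1^k)}$ and expanding the product $p_n e_k=\bigl(\sum_i x_i^n\bigr)\bigl(\sum_{|S|=k}\prod_{j\in S}x_j\bigr)$ in the monomial basis, each resulting monomial either keeps the exponent-$n$ variable disjoint from the $e_k$-factor (shape $(n,1^k)$, using $n\ge 2$) or merges them (shape $(n+1,1^{k-1})$), each with coefficient $1$. Hence, for $n\ge 2$ and $k\ge 1$,
\[ p_n\,e_k \;=\; m_{(n,1^k)} + m_{(n+1,1^{k-1})}, \]
so that $m_{(n,1^k)}=p_n e_k-m_{(n+1,1^{k-1})}$; unrolling down to $m_{(n+k)}=p_{n+k}$ gives the closed form $m_{(n,1^k)}=\sum_{j=0}^{k}(-1)^j p_{n+j}\,e_{k-j}$. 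This sets up an induction on $k$ whose base uses the known power-sum expansion of $(-1)^{N-1}p_N$ into the $\Cop$-family.

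\emph{Step 2 (streamlining the right-hand side).} Using $e_{k-b}=\sum_{\rho\models k-b}\Cop_\rho 1$ and concatenating operators, each term $\Cop_\tau\Cop_{a+b}\Cop_\rho 1$ equals $\Cop_\gamma 1$ for the composition $\gamma=(\tau,a+b,\rho)\models n+k$ carrying a marked part. For a fixed $\gamma$ and marked position $t$, the constraints $1\le a\le n$ and $0\le b\le k$ become $S_{t-1}<n\le S_t$ for the partial sums $S_i=\gamma_1+\cdots+\gamma_i$; since the $S_i$ strictly increase there is exactly one such $t$. Thus the stated sum collapses to $\sum_{\gamma\models n+k} c_{a(\gamma)}(q)\,\Cop_\gamma 1$, where $a(\gamma)=n-S_{t-1}$ is read off the unique part whose partial sums straddle $n$ and $c_a(q)=k+1+\sum_{i=1}^{a-1}q^{n-i}$. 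This ``crossing'' description is what I would actually compare against the left-hand side.

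\emph{Step 3 (the engine).} From the definition of $\Cop_a$ and the plethystic evaluation $p_n\bigl[X-\tfrac{1-1/q}{z}\bigr]=p_n[X]-(1-q^{-n})z^{-n}$, reading off the coefficient of $z^a$ gives the commutation relation
\[ p_n\cdot\Cop_a f \;=\; \Cop_a(p_n f) + (-1)^n(q^n-1)\,\Cop_{a+n}f \]
for every symmetric function $f$. Iterating this to push multiplication by $p_n$ through $e_k=\sum_\rho\Cop_\rho 1$, and finishing with the power-sum expansion of $p_n$ when it lands on $1$, produces an explicit $\Cop$-expansion of $p_n e_k$: a ``correction'' family in which one part of a composition of $k$ is enlarged by $n$, together with a ``main'' family of concatenations of a composition of $k$ with a composition of $n$. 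Fed into the recursion of Step 1 alongside the inductive hypothesis for $m_{(n+1,1^{k-1})}$, this yields a candidate expansion of $(-1)^{n-1}m_{(n,1^k)}$.

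The main obstacle is that $\{\Cop_\gamma 1\}_{\gamma\models n+k}$ is linearly dependent, so these expansions are not unique: the expansion from Step 3 and the crossing expansion from Step 2 need not agree term by term, only as symmetric functions. For instance at $n=2,\ k=1$ they differ by $(1+q)\bigl(\Cop_{(2,1)}1-q\,\Cop_{(1,2)}1\bigr)$, which vanishes by the relation $\Cop_{(2,1)}1=q\,\Cop_{(1,2)}1$ recorded in the Remark. The crux is therefore to verify that the \emph{specific} coefficients $c_{a(\gamma)}(q)$ reproduce $(-1)^{n-1}m_{(n,1^k)}$, and I expect to do this by passing to a genuine basis---the partition-indexed $\{\Cop_\lambda 1\}$, or the $h$-basis, using $\Cop_a 1=(-1/q)^{a-1}h_a$ together with the commutation relation to rewrite every $\Cop_\gamma 1$---and checking the equality there. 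Controlling this reduction to an honest basis, and the bookkeeping of the resulting $q$-coefficients, is the step I expect to be hardest.
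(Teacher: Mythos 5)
Your preliminary steps all check out: the product identity $p_n e_k = m_{n,1^k} + m_{n+1,1^{k-1}}$ for $n \ge 2$, the collapse of the right-hand side to $\sum_{\gamma \models n+k} c_{a(\gamma)}(q)\, \Cop_\gamma 1$ via the unique index where the partial sums of $\gamma$ cross $n$, and the commutation relation $p_n \Cop_a f = \Cop_a(p_n f) + (-1)^n(q^n-1)\Cop_{a+n} f$ (which does follow from $p_n\bigl[X-\tfrac{1-1/q}{z}\bigr] = p_n[X] - (1-q^{-n})z^{-n}$ exactly as you say). But the proposal stops where the proof actually begins. As you yourself point out, $\{\Cop_\gamma 1\}_{\gamma \models n+k}$ is linearly dependent, so producing \emph{some} $\Cop$-expansion of $p_n e_k - m_{n+1,1^{k-1}}$ from Step 3 and the inductive hypothesis does not establish that it agrees with the crossing expansion of Step 2; the entire content of the theorem is that these particular coefficients work. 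Your plan for closing this gap --- ``passing to a genuine basis'' --- is named but not executed, and it is not routine: rewriting an arbitrary $\Cop_\gamma 1$ in the $h$-basis or in $\{\Cop_\lambda 1\}_{\lambda \vdash n+k}$ requires the full straightening relations for the $\Cop$ operators, and the resulting coefficients are exactly the $q$-bookkeeping you flag as the hardest step. A symptom of the same issue appears already in your base case: at $k=0$ the crossing coefficients $1+q^{n-1}+\cdots+q^{n-a+1}$ do not match the cited expansion $(-1)^{N-1}p_N=\sum_{\alpha}[\alpha_{\ell(\alpha)}]_q\Cop_\alpha 1$ term by term, so even launching the induction requires identifying two distinct dependent expansions of the same symmetric function.

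For what it is worth, your route is genuinely different from the paper's, which inducts on $n$ with $k$ fixed by peeling operators $q^i\Cop_i$ off the front of $\tau$, and resolves the linear-dependence problem by computing everything in the Schur basis: a dual-Pieri-type lemma gives $(-q)^{m-1}\Cop_m s_\lambda = \sum_{\mu} s_{m+|\lambda/\mu|,\mu}/q^{|\lambda/\mu|}$ over horizontal strips $\lambda/\mu$, and the known Schur expansion of hook monomials lets both sides be compared coefficientwise in an honest basis. If you want to salvage your approach, the analogous move would be to push your Step 3 expansion of $p_n e_k$ into the Schur basis as well; without some such reduction, what you have is a plausible strategy with the decisive verification missing.
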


Applying \(\nabla\) to this identity, together with the Compositional Shuffle Conjecture, gives a parking function interpretation for \( (-1)^{n-1} \nabla m_{n,1^k} \).

\begin{corollary} \label{cor:PFexp}
Let \( n \geq 2 \) and \( k \geq 1 \). Then
\[
(-1)^{n-1} \nabla m_{n,1^k} = \sum_{PF \in {\mathcal PF}_n} \qpoly_{n,k}(PF) \, t^{\area(PF)} q^{\dinv(PF)} F_{ides(\sigma(PF))}
\]
where \( \qpoly_{n,k}(PF) \) is computed as follows. The constant term is \( k+1 \). If \(PF\) is not touching the main diagonal at \( (n-1,n-1) \), add \( q^{n-1} \). Otherwise stop. If \( PF \) is not touching the main diagonal at \( (n-2,n-2) \), add \( q^{n-2} \). Otherwise stop. Continue in this way until you stop or run out of points on the diagonal (this does not include the starting point).
\end{corollary}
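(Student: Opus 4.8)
The plan is to obtain the Corollary directly from Theorem~\ref{thm:mexp} by applying $\nabla$ and feeding the result into the Carlsson--Mellit theorem (Theorem~\ref{thm:comp}). Since $\nabla$ is linear, applying it to the identity of Theorem~\ref{thm:mexp} and then replacing each $\nabla \Cop_\tau \Cop_{a+b} \Cop_\rho 1$ by $\sum_{\comp(PF) = \tau\cdot(a+b)\cdot\rho} t^{\area(PF)} q^{\dinv(PF)} F_{\ides(\sigma(PF))}$ yields a single sum over parking functions. Each composition $\tau\cdot(a+b)\cdot\rho$ has size $(n-a)+(a+b)+(k-b)=n+k=|\mu|$, so all parking functions appearing have $n+k$ cars, consistent with $\deg \nabla m_{n,1^k}$. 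The statistics $\area$, $\dinv$, and $\ides$ are carried along verbatim by Theorem~\ref{thm:comp} and need no further manipulation; the entire content of the Corollary is to collect, for each fixed $PF$, the total $q$-coefficient it accumulates and to show this equals $\qpoly_{n,k}(PF)$.

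First I would fix a parking function $PF$ and write $\comp(PF) = \gamma = (\gamma_1,\dots,\gamma_\ell)$ with partial sums $s_0 = 0$ and $s_j = \gamma_1 + \cdots + \gamma_j$, recalling that the underlying Dyck path touches the main diagonal exactly at the points $(s_0,s_0),\dots,(s_\ell,s_\ell)$. A tuple $(\tau, a, b, \rho)$ contributes to $PF$ precisely when $\tau\cdot(a+b)\cdot\rho = \gamma$, which amounts to singling out one part $\gamma_j$ to play the role of $a+b$, with $\tau = (\gamma_1,\dots,\gamma_{j-1})$ and $\rho = (\gamma_{j+1},\dots,\gamma_\ell)$. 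Matching sizes forces $a = n - s_{j-1}$ and $b = s_j - n$. The admissibility constraints $1 \le a \le n$ and $0 \le b \le k$ then translate into $s_{j-1} < n \le s_j$. Because the partial sums are strictly increasing, there is a unique index $j^\ast$ with $s_{j^\ast-1} < n \le s_{j^\ast}$ (it exists since $s_0 = 0 < n \le n+k = s_\ell$). Hence each $PF$ receives a contribution from exactly one decomposition, namely the one splitting $\gamma$ at the part straddling the value $n$.

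Next I would evaluate the coefficient attached to $j^\ast$. With $a = n - s_{j^\ast-1}$, the prefactor from Theorem~\ref{thm:mexp} is
\[
k+1 + \sum_{i=1}^{a-1} q^{n-i} = k+1 + q^{n-1} + q^{n-2} + \cdots + q^{\,s_{j^\ast-1}+1}.
\]
It remains to recognize the right-hand side as the output of the stated scanning procedure. The diagonal contact points below $n$ are exactly $s_0,\dots,s_{j^\ast-1}$, and $s_{j^\ast-1}$ is the largest of them; consequently, reading the points $(n-1,n-1),(n-2,n-2),\dots$ downward, none of $(n-1,n-1),\dots,(s_{j^\ast-1}+1,s_{j^\ast-1}+1)$ is a contact point while $(s_{j^\ast-1},s_{j^\ast-1})$ is. The procedure therefore adds precisely $q^{n-1}+\cdots+q^{\,s_{j^\ast-1}+1}$ to the base value $k+1$, matching the display above. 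The boundary case $s_{j^\ast-1}=0$ corresponds to $j^\ast = 1$, where the only contact point below $n$ is the origin; the convention that the starting point does not halt the scan reproduces the full sum $q^{n-1}+\cdots+q^{1}$ obtained from $a=n$.

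The argument is essentially bookkeeping once Theorem~\ref{thm:mexp} is in hand, so I do not anticipate a genuine obstacle. The one place demanding care is the uniqueness of the straddling index $j^\ast$ together with the faithful translation of the algebraic prefactor into the geometric ``scan down the diagonal'' rule, including the degenerate cases of empty $\tau$ or $\rho$ and the origin's exceptional status. I would verify these edge cases explicitly to ensure the coefficient matching is exact for every $PF \in \mathcal{PF}_{n+k}$.
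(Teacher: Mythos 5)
Your proposal is correct and follows exactly the route the paper intends: apply $\nabla$ to Theorem~\ref{thm:mexp}, invoke Theorem~\ref{thm:comp} to replace each $\nabla \Cop_\tau \Cop_{a+b}\Cop_\rho 1$ by its parking-function sum, and match the unique straddling decomposition of $\comp(PF)$ at $n$ to the scanning rule for $\qpoly_{n,k}$ (the paper dispenses with this bookkeeping in one sentence, so you have in fact supplied more detail than the source). Your observation that the parking functions have $n+k$ cars is also right --- the corollary's index set should read $\mathcal{PF}_{n+k}$ rather than $\mathcal{PF}_n$.
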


For example, if \( n=5 \) and \( k=3 \), then the parking function \( PF \) in Figure~\ref{fig:PFex} has \( \qpoly_{5,3}(PF) = 4 + q^3 +q^4 \). This is because \( PF \) does not touch the main diagonal at \( (4,4) \) or \( (3,3) \) but it does touch at \( (2,2) \). Note that \(\qpoly_{n,k}(PF) \) does not depend on the placement of the cars in \( PF \), only on the underlying Dyck path. Hence Corollary~\ref{cor:PFexp} expresses \( \nabla m_{n,1^k} \) as a positive sum of LLT polynomials.

Before we prove this theorem, we need a lemma. In \cite{compconj}, the authors observe that the \( \Cop \) operators are closely related to the Bernstein operators: Let \( \Sop_a \) be the operator that sends \(s_{\lambda_1,\dots,\lambda_k}\) to \( s_{a,\lambda_1,\dots,\lambda_k} \) for any partition \( \lambda = (\lambda_1 \geq \dots \geq \lambda_k) \). Then for any symmetric function \( P \),
\[
\Sop_a P[X] = P \left[ X - \frac{1}{z} \right] \sum_{m \geq 0} z^m h_m[X] \Big|_{z^a}.
\]
While they use this relationship to express the \( \Sop \) operators in terms of the \( \Cop \) operators, the reverse can be accomplished in a similar way. 

\begin{lemma}
For all \( m \geq 0 \),
\[
\Cop_m = \left( \frac{-1}{q} \right)^{m-1} \sum_{k \geq 0} \frac{\Sop_{m+k} h_k^{\perp}}{q^k}.
\]
Hence for all partitions \( \lambda \),
\[
(-q)^{m-1} \Cop_m s_\lambda = \sum_{\mu} \frac{s_{m+|\lambda / \mu|, \mu}}{q^{|\lambda/\mu|}}
\]
where the sum is over all partitions \( \mu \subseteq \lambda \) for which the diagram \( \lambda / \mu \) is a horizontal strip (i.e., contains no two boxes in the same column).
\end{lemma}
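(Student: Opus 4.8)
The plan is to reduce everything to a single plethystic addition formula and then match coefficient extractions. The one fact I would isolate first is that adjoining a single ``letter'' \( u/z \) to an alphabet acts on any symmetric function \( f \) by
\[
f\!\left[X + \frac{u}{z}\right] = \sum_{k \geq 0} \frac{u^k}{z^k}\,(h_k^{\perp} f)[X].
\]
This is immediate from the dual Pieri rule: evaluating a skew Schur function \( s_{\lambda/\mu} \) at the single letter \( u/z \) gives \( (u/z)^{|\lambda/\mu|} \) when \( \lambda/\mu \) is a horizontal strip and \( 0 \) otherwise, which is exactly how \( \sum_{k} (u/z)^k h_k^{\perp} \) acts on \( s_\lambda \); linearity then extends it to all \( f \).

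Next I would write the plethystic argument in the definition of \( \Cop_m \) as a genuine sum,
\[
X - \frac{1-1/q}{z} = \left(X - \frac{1}{z}\right) + \frac{1/q}{z},
\]
and apply the displayed formula with \( u = 1/q \) and the alphabet \( X - 1/z \) in place of \( X \). This turns \( P[X - (1-1/q)/z] \) into \( \sum_{k \geq 0} q^{-k} z^{-k} (h_k^{\perp} P)[X - 1/z] \). Substituting into
\[
\Cop_m P = \left(-\tfrac{1}{q}\right)^{m-1} P\!\left[X - \frac{1-1/q}{z}\right]\sum_{j \geq 0} z^j h_j[X]\ \Big|_{z^m}
\]
and pulling the factor \( z^{-k} \) outside the coefficient extraction, the key observation is that extracting \( z^m \) from \( z^{-k}\,G(z) \) is the same as extracting \( z^{m+k} \) from \( G(z) \). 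Since here \( G(z) = (h_k^{\perp}P)[X - 1/z]\sum_j z^j h_j[X] \), this is precisely \( \Sop_{m+k}(h_k^{\perp}P) \) by the definition of the Bernstein operator. Collecting the prefactor \( (-1/q)^{m-1} \) yields the first identity.

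For the Schur-function consequence I would simply apply the operator identity to \( P = s_\lambda \), expand \( h_k^{\perp} s_\lambda = \sum_\mu s_\mu \) over all \( \mu \subseteq \lambda \) with \( \lambda/\mu \) a horizontal strip of size \( k \) (again the adjoint Pieri rule), and use \( \Sop_{m+k} s_\mu = s_{m+k,\,\mu} \). Writing \( k = |\lambda/\mu| \) and noting the scalar collapse \( (-q)^{m-1}(-1/q)^{m-1} = 1 \) gives exactly the claimed sum over horizontal strips.

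The two points that need genuine care, rather than routine bookkeeping, are: first, justifying the interchange of the formal coefficient extraction \( |_{z^m} \) with the sum over \( k \) --- this is harmless because \( h_k^{\perp} P = 0 \) once \( k \) exceeds \( \deg P \), so the sum is finite on any input; and second, interpreting \( s_{m+k,\,\mu} \) when \( m+k < \mu_1 \), where \( \Sop_{m+k} s_\mu \) must be read through the usual Jacobi--Trudi straightening rather than as an honest partition-indexed Schur function. I expect the second point to be the only real subtlety, and it is purely a matter of fixing the convention under which \( \Sop_{m+k} \) is taken to be the Bernstein operator.
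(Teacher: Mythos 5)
Your argument is correct, but it takes a genuinely different route from the paper. The paper starts from Proposition 3.6 of Haglund--Morse--Zabrocki, which expresses the Bernstein operators in terms of the \( \Cop \) operators as \( \Sop_{m} = (-q)^{m-1} \sum_{i \geq 0} \Cop_{m+i} e_i^\perp \); substituting this into the right-hand side of the claimed identity and collecting terms by total degree reduces everything to the classical alternating convolution \( \sum_{k=0}^d (-1)^k e_{d-k} h_k = 0 \) for \( d>0 \), so only the \( \Cop_m \) term survives. You instead work directly from the plethystic kernels of both operators, splitting \( X - \tfrac{1-1/q}{z} = \bigl(X - \tfrac{1}{z}\bigr) + \tfrac{1}{qz} \) and expanding the added letter via \( f[X + u/z] = \sum_k (u/z)^k (h_k^\perp f)[X] \), then shifting the coefficient extraction from \( z^m \) to \( z^{m+k} \). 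Your sign and power bookkeeping checks out, and the finiteness remark (the sum terminates since \( h_k^\perp P = 0 \) for \( k > \deg P \)) is the right justification for interchanging the sum with \( |_{z^m} \). What the paper's route buys is brevity given the cited proposition --- it is a purely formal inversion requiring no plethystic manipulation; what your route buys is self-containedness and transparency about where the powers of \( q \) come from (they are literally the single letter \( 1/(qz) \)), and it is essentially the same mechanism by which Haglund, Morse and Zabrocki derived the reverse expansion in the first place. Your closing caveat about interpreting \( \Sop_{m+k} s_\mu = s_{m+k,\mu} \) via straightening when \( m+k < \mu_1 \) is a legitimate convention point that the paper passes over silently; both proofs need it equally.
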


\begin{proof}
In Proposition 3.6 of \cite{compconj}, Haglund, Morse and Zabrocki give the expansion
\[
\Sop_{m} = (-q)^{m-1} \sum_{i \geq 0} \Cop_{m+i} e_i^\perp.
\]
Plugging this into the right hand side of our desired equality gives
\begin{align*}
 \left( \frac{-1}{q} \right)^{m-1} \sum_{k \geq 0} \frac{\Sop_{m+k} h_k^{\perp}}{q^k} &= \sum_{i,k \geq 0} (-1)^k \Cop_{m+k+i} e_i^\perp h_k^\perp \\
 &= \sum_{d \geq 0} \Cop_{m+d} \sum_{k=0}^d (-1)^k e_{d-k}^\perp h_k^\perp.
 \end{align*}
 Since \(\sum_{k=0}^d (-1)^k e_{d-k} h_k = 0\) whenever $d>0$, the only nonzero term is $\Cop_m$. This gives the first equality. The second follows from the Pieri rule.
 
\end{proof}

\begin{proof}[Proof of Theorem~\ref{thm:mexp}]

Note that our identity is equivalent to
\begin{align*}
(-&1)^{n-1} m_{n,1^k} - (k+1) e_{n+k} \\
&= \sum_{i=1}^{n-2} q^i \Cop_i \left( ({-}1)^{n-i+1} m_{n-i,1^k} {-} (k{+}1) e_{n-i+k} \right)
%\\& \hspace{.8in} 
+ q \, [n-1]_q \sum_{i=n}^{n+k} \Cop_i e_{n-i+k}\\
&= \sum_{i=1}^{n-2} q^i \Cop_i \left( ({-}1)^{n-i+1} m_{n-i,1^k} {-} (k{+}1) e_{n-i+k} \right)
%\\& \hspace{.8in} 
+ (-1)^{n-1} \frac{[n-1]_q}{q^{n-2}} s_{n,1^k}
\end{align*}
Induct on \( n \) with \( k \) fixed. The above clearly holds when \( n=2 \). Now suppose \( N>2 \) and that the identity holds for all \( n < N \). A combinatorial expansion for any monomial symmetric function into the Schur basis is given by E\(\mathrm{\breve{g}}\)ecio\(\mathrm{\breve{g}}\)lu and Remmel \cite{rimhook} in terms of rim hook tabloids. However, we only use the special case of a hook, which appears in \S I.6 Exercise 4(e)(iii) of  \cite{macbook}:
\[
\langle m_{a,1^k}, s_{\mu} \rangle = \begin{cases} (-1)^{a-1} (k+1) & \hbox{ if } \mu=1^{a+k} \\
(-1)^{a-\ell} & \hbox{ if } \mu=(\ell,2^j,1^i) \hbox{ for } j+\ell \leq a \\
0 & \hbox{ otherwise.} \end{cases}
\]
This, together with the above lemma can be used to compute the Schur expansion of the right hand side. It is then routine (but technical) to show that this matches the Schur expansion of the left hand side.

\end{proof}

\begin{figure}
\begin{center}
\includegraphics[width=4in]{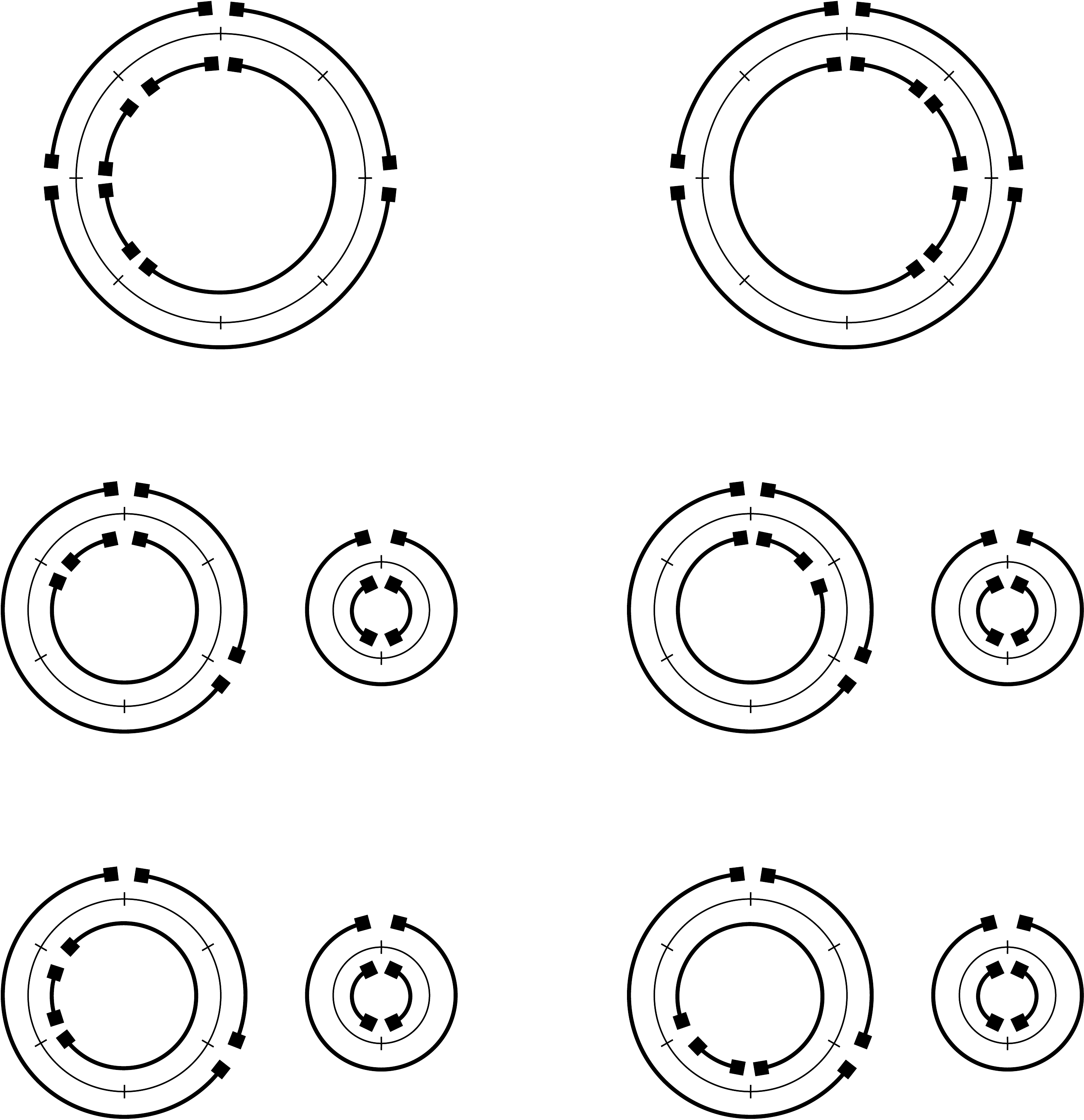}
\caption{The bi-brick permutations corresponding to Figure~\ref{fig:PFex} when $n=5$ and $k=3$.}
\label{fig:bigex}
\end{center}
\end{figure}

Now we show the correspondence between this formula and the hook case of our conjecture. Suppose that $\mu=(n,1^k)$ and let $\alpha$ be any composition of size $n+k$. First we note that (up to rotation) there is only one way to arrange the inner bricks of size $\mu$ along a cycle of size $n+k$. Starting with this arrangement of inner bricks, choose any of the $k+1$ segments at which an inner brick starts. From this point, add outer bricks of sizes $\alpha_1, \alpha_2, \dots$  in a clockwise fashion. Now look at the word in the alphabet $\{B<L<N<U\}$ obtained by reading clockwise from the previously chosen starting point. It may or may not be a Lyndon word. But we claim there is always a unique way to chop it into Lyndon words so that the pieces are in reverse lexicographic order. Chop it like so and take the bi-brick permutation corresponding to this multiset of Lyndon words. For example, when $\mu=(5,1^3)$ and $\alpha=(2,4,2)$, we obtain the first four bi-brick permutations of Figure~\ref{fig:bigex}. Clearly each of these $k+1$ bi-brick permutations $\Pi$ have $\alpha(\Pi)=\alpha$ and $\mu(\Pi)=\mu$. Furthermore, each cycle's Lyndon word contains a $B$. This is because the original large cycle's (not necessarily Lyndon) word started with $B$, each cut was only made before another $B$.

We claim that these are the only bi-brick permutations $\Pi$ satisfying $\alpha(\Pi)=\alpha$ and $\mu(\Pi)=\mu$ in which each cycle's Lyndon word contains a $B$. We also claim that all the remaining bi-brick permutations $\Pi$ with $\alpha(\Pi)=\alpha$ and $\mu(\Pi)=\mu$ are obtained by rotating the inner bricks of a single bi-brick permutation. In particular, let $r$ and $s$ be as small as possible so that $\sum_{i=1}^r \alpha_i = n+s$. Then the desired bi-brick permutation's inner brick of size $n$ will lie on a cycle with outer bricks of size $\alpha_1, \dots, \alpha_r$, all of whose $s$ inner bricks of size $1$ will be underneath the outer brick corresponding to $\alpha_r$. Call this the main cycle. The remaining outer bricks of sizes $\alpha_{r+1},\dots,\alpha_{\ell(\alpha)}$ will each have it's own cycle with inner bricks all of size $1$. Then starting with this special bi-brick permutation, we will rotate the inner bricks of the main cycle so that all $s$ inner bricks of size $1$ still lie underneath the outer brick corresponding to $\alpha_r$. This gives the remaining terms of $\qpoly_{n,k}$. Continuing our previous example, we obtain the last two bi-brick permutations of Figure~\ref{fig:bigex}.

\begin{remark}
In the above algorithm, a possible value of $\stat$ is the number of segments covered by the inner brick of size $n$ before (clockwise) the outer brick corresponding to $\alpha_r$ starts. Various modifications of this statistic have been tried for non-hook shapes without success. For example, when $\mu$ is a two-column shape we would hope that $\stat$ is simply the number of times an outer brick starts at the interior point of an inner two-brick (i.e., the number of $L$'s in the Lyndon word), but this is not the case.
\end{remark}

\bibliography{nablam}
\bibliographystyle{acm}

\end{document}